\renewcommand{\geq}{\geqslant}
\renewcommand{\leq}{\leqslant}
\renewcommand{\cdots}{\ldots}
\newcommand{\tab}{\hspace{1cm}}
\titleformat{\section}{\bfseries\centering}{\thesection \hspace{0.5cm}}{12pt}{}
\titleformat{\subsection}{\bfseries}{\thesubsection \hspace{0.5cm}}{12pt}{}
\newtheorem{theorem}{Theorem}[section]
\newtheorem{definition}{Definition}[section]
\begin{document}\setlength{\parindent}{0cm}\thispagestyle{empty}
	\begin{center}
		\Large \textbf{On Legendre Cordial Labeling of Some Graphs \\
		Under Graph Operations}
	\end{center}
	\vspace{2mm}
	
	\textbf{Jason D. Andoyo}
	
	University of Southeastern Philippines, Davao City, Philippines

	\section*{Abstract} \small
	\tab For a simple connected graph $G$ of order $n$, a bijective function $f:V(G)\to\{1,2,\cdots,n\}$ is said to be a Legendre cordial labeling modulo $p$, where $p$ is an odd prime, if the induced function $f_p^*:E(G)\to \{0,1\}$, defined by $f_p^*(uv)=0$ whenever $([f(u)+f(v)]/p)=-1$ or $f(u)+f(v)\equiv 0(\text{mod }p)$, and $f_p^*(uv)=1$ whenever $([f(u)+f(v)]/p)=1$, satisfies the condition $|e_{f_p^*}(0)-e_{f_p^*}(1)|\leq 1$ where $e_{f_p^*}(i)$ is the number of edges with label $i$ ($i=0,1$). This paper investigates the Legendre cordial labeling of graphs obtained through various operations: join, corona, lexicographic product, cartesian product, tensor product, and strong product.

	\vspace{2mm}
	
	\textbf{Keywords:} odd prime, Legendre cordial labeling, Legendre cordial graph
	
	\vspace{2mm}
	
	\textbf{2020 Mathematics Subject Classification:} 05C78, 11A07, 11A15
	
	\normalsize
	\section{Introduction}
	
	\tab All graphs considered in this paper are finite, simple, and connected. A graph $G=(V,E)$ has a vertex set $V=V(G)$ and an edge set $E=E(G)$, and the cardinalities $|V|$ and $|E|$ are called the order and size of $G$, respectively. In addition, the elements of $V$ and $E$ are called vertices and edges, respectively. For further graph-theoretic definitions and properties, refer to \cite{Chartrand}.
	
	\tab Mathematics often evolves from simple ideas that grow into powerful tools for understanding the world. Graph theory, for example, began with a puzzle in 1736 when Euler solved the famous Königsberg bridge problem. This laid the groundwork for representing networks using vertices and edges. Over time, graph theory became a key part of computer science, transportation planning, biology, and social networks.
	
	\tab One important concept in graph theory is \textit{graph labeling}, in which numbers or labels are assigned to the vertices, the edges, or both of a graph following specific conditions. Graph labeling has applications in communication systems, coding theory, circuit design, and frequency assignment. 
	
	\tab Rosa \cite{Rosa} introduced several important types of graph labeling, known as $\alpha$-, \-$\beta$-, $\sigma$-, and $\rho$-valuations, which laid the foundation for many forms of graph labeling studied today. One of these, the $\beta$-valuation (also known as \textit{graceful labeling}), applies to a graph with $q$ edges: labeling the $n$ vertices with distinct values from $\{0, 1, ..., q\}$ so that the edge labels, defined as the absolute differences between endpoint labels, are all distinct and range from $1$ to $q$. This becomes particularly useful in network addressing and X-ray crystallography, where unique edge labels prevent interference and overlap.
	
	\tab A specific type of graph labeling known as \textit{cordial labeling}, introduced by Cahit \cite{Cahit} in 1987, extends these ideas in a more accessible form. Inspired by Rosa’s graceful labeling, cordial labeling is considered a weaker version of graceful labeling. The idea is to label the vertices with 0s and 1s so that the resulting edge labels (often derived from vertex labels) are roughly balanced in count. Cordial labeling has given rise to many interesting variants presented in \cite{Gallian}.
	
	\tab Number theory, another profound area of mathematics, focuses on the properties of integers. Prime numbers have been studied since ancient times. Euclid proved the infinitude of primes, while Eratosthenes devised the elegant sieve method to generate primes. In modern times, prime numbers play a critical role in cybersecurity, especially through encryption schemes like RSA, which rely on the difficulty of factoring large primes. \cite{Rosen}
	
	\tab The Legendre symbol, introduced by Legendre in the late 18th century, was developed as part of his investigations into quadratic residues and reciprocity, a cornerstone topic in number theory.By definition, an integer $a$ relatively prime to an odd prime $p$ is called a quadratic residue of $p$ if the quadratic congruence $x^2\equiv a(\text{mod }p)$ has a solution; otherwise, $a$ is a quadratic nonresidue of $p$. Thus, the Legendre symbol $(a/p)$ is defined as
	\begin{equation*}
		(a/p)=\begin{cases}
			-1 &\text{ if $a$ is a quadratic nonresidue of $p$}\\
			1&\text{ if $a$ is a quadratic residue of $p$}.
		\end{cases}
	\end{equation*}
	One important property of the Legendre symbol is the following: If $a$ and $b$ are integers relatively prime to $p$ (where $p$ is an odd prime) and $a\equiv b(\text{mod }p)$, then $(a/p)=(b/p)$. For an expanded discussion of the Legendre symbol and its properties, see \cite{Burton}.
	
	\tab The concept of Legendre cordial labeling was first introduced in \cite{Andoyo}, where it was shown that the path graph $P_p$, cycle graph $C_p$, star graph $S_p$, tadpole graph $T_{p,p}$, kayak paddle graph $KP_{p,p,p}$, bistar graph $B_{p,p}$, and fan graph $F_{p+1}$ all admit a Legendre cordial labeling modulo $p$, where $p$ is an odd prime. Additionally, the characterization of the Legendre cordial labeling of complete graph $K_n$ has been obtained in \cite{Andoyo2}. However, no study has been conducted for graphs obtained from graph operations. Thus, this paper explores the Legendre cordial labeling of some graphs obtained from graph operations: join, corona, lexicographic product, cartesian product, tensor product, and strong product.
	
	\tab Let $f:V(G)\to \{1,2,...,n\}$ be a bijective function, where $G$ is a simple connected graph of order $n$, and let $p$ be an odd prime. Then $f$ is said to be a \textit{Legendre cordial labeling modulo $p$} if the induced function $f_p^*:E(G)\to \{0,1\}$, defined by
	\begin{equation*}
		f_p^*(uv)=\begin{cases}
			0 &\text{ if }([f(u)+f(v)]/p) = -1\text{ or }f(u)+f(v)\equiv0(\text{mod }p)\\
			1 &\text{ if }([f(u)+f(v)]/p) = 1,
		\end{cases}
	\end{equation*}
	satisfies the condition $|e_{f_p^*}(0)-e_{f_p^*}(1)|\leq 1$, where $e_{f_p^*}(i)$ denotes the number of edges with label $i$ ($i=0,1$). A graph that admits a Legendre cordial labeling modulo $p$ is called a \textit{Legendre cordial graph modulo $p$}. 
	
	\section{Basic Concepts}
	
	\begin{definition}
		A path graph $P_n$ of order $n$ is a graph with vertex set $V(P_n)=\{v_1,v_2,...,v_n\}$ and edge set $E(P_n)=\{v_1v_2,v_2v_3,...,v_{n-1}v_n\}$.
	\end{definition}

	\begin{definition}
		A complete graph $K_n$ of order $n$ is a graph for which each two distinct vertices of $K_n$ are adjacent.
	\end{definition}

	\begin{definition}
		A graph $G$ is said to be a bipartite graph if its vertex set $V(G)$ can be partitioned into two non-empty subsets $V_1$ and $V_2$ such that every edge of $G$ has one end in $V_1$ and one end in $V_2$. 
	\end{definition}
	
	\begin{definition}{\normalfont\cite{Alameri,Chartrand,Entero}}
		Let $G_1$ and $G_2$ be graphs. Note that the vertex set of the graphs defined in items (iii) to (vi) is $V(G_1)\times V(G_2)$.
		\begin{enumerate}
			\item [i.] The join graph $G_1+G_2$ is a graph with vertex set 
			\begin{equation*}
				V(G_1+G_2)=V(G_1)\cup V(G_2)
			\end{equation*}
			and edge set
			\begin{equation*}
				E(G_1+G_2)=E(G_1)\cup E(G_2)\cup \{uv:u\in V(G_1)\text{ and }v\in V(G_2)\}.
			\end{equation*}
			\item[ii.] If $G_1$ has order $n$, then the corona graph $G_1\circ G_2$ is a graph obtained by taking one copy of $G_1$ and $n$ copies of $G_2$, and then joining the $i$th vertex of $G_1$ to every vertex of the $i$th copy of $G_2$. 
			\item[iii.] The lexicographic product graph $G_1[G_2]$ is a graph with edge set
			\begin{equation*}
				E(G_1[G_2])=\{(v_1,u_1)(v_2,u_2): v_1v_2\in E(G_1)\text{, or }v_1=v_2 \text{ and }u_1u_2\in E(G_2)\}.
			\end{equation*}
			\item[iv.] The cartesian product graph $G_1\square G_2$ is a graph with edge set
			{\small
				\begin{equation*}
					E(G_1\square G_2)=\{(v_1,u_1)(v_2,u_2): v_1=v_2\text{ and }u_1u_2\in E(G_2), \text{ or } u_1=u_2\text{ and }v_1v_2\in E(G_1)\}.
			\end{equation*}}
			\item[v.] The tensor product graph $G_1\times G_2$ is a graph with edge set
			\begin{equation*}
				E(G_1\times G_2)=\{(v_1,u_1)(v_2,u_2): v_1v_2\in E(G_1)\text{ and }u_1u_2 \in E(G_2)\}.
			\end{equation*}
			\item[vi.] The strong product graph $G_1\boxtimes G_2$ is a graph with edge set
			\begin{equation*}
				E(G_1\boxtimes G_2)=E(G_1\square G_2)\cup E(G_1\times G_2).
			\end{equation*}
		\end{enumerate}
	\end{definition}
	
	\begin{theorem}{\normalfont\cite{Weichsel}}\label{thm1}
		Let $G_1$ and $G_2$ be connected graphs. If $G_1$ or $G_2$ has a cycle of odd order, then the tensor product graph $G_1\times G_2$ is a connected graph.
	\end{theorem}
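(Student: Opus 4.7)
The plan is to reduce the connectivity of $G_1 \times G_2$ to a question about synchronising walk lengths in the two factors. The key observation about the tensor product is that $(a,b)(c,d) \in E(G_1 \times G_2)$ exactly when $ac \in E(G_1)$ and $bd \in E(G_2)$, so an $(u_1,v_1)$-$(u_2,v_2)$ walk of length $k$ in $G_1\times G_2$ corresponds bijectively to a pair consisting of a $u_1$-$u_2$ walk of length $k$ in $G_1$ and a $v_1$-$v_2$ walk of length $k$ in $G_2$. Thus it suffices to show that for every choice of endpoints, there exists a common walk length achievable in both factors simultaneously.

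To produce such a common length, I would first establish a walk-length lemma: for any connected graph $H$ and any $x,y \in V(H)$, the set $L_H(x,y)$ of lengths of $x$-$y$ walks is closed under adding $2$ (append a back-and-forth traversal of any edge incident to a vertex on the walk). Hence $L_H(x,y)$ contains an entire arithmetic progression $\{\ell_0,\ell_0+2,\ell_0+4,\ldots\}$ of a fixed parity. Assuming without loss of generality that $G_1$ contains an odd cycle $C$ of length $2s+1$ through some vertex $w$, I would then show that $L_{G_1}(u_1,u_2)$ contains walks of \emph{both} parities: starting from any $u_1$-$u_2$ walk $P$ of length $\ell$, splice in a detour $u_1 \to w \to C \to w \to u_1$ to obtain a walk of length $\ell+2m+(2s+1)$, which has opposite parity. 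Combined with the previous closure-under-$+2$ argument, this gives that $L_{G_1}(u_1,u_2)$ contains every sufficiently large integer.

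The conclusion then follows quickly: given arbitrary $(u_1,v_1),(u_2,v_2)$, the set $L_{G_2}(v_1,v_2)$ is infinite (contains $m,m+2,m+4,\ldots$ for some $m$), while $L_{G_1}(u_1,u_2)$ eventually contains every integer, so their intersection is nonempty. Any common value $k$ yields walks of equal length in the two factors, and the resulting pair is a walk in $G_1\times G_2$ joining $(u_1,v_1)$ to $(u_2,v_2)$. Since the endpoints were arbitrary, $G_1\times G_2$ is connected.

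The main obstacle is the walk-parity step: one must carefully verify that the odd-cycle detour can be inserted into a walk between arbitrary endpoints, which requires first routing from $u_1$ to the cycle and back. This is a standard non-bipartiteness argument, but it is the only nontrivial content of the theorem; everything else is bookkeeping with the edge definition of $\times$ and with the closure of walk-length sets under $+2$.
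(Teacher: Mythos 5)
Your argument is correct, but note that the paper does not prove this statement at all: it is quoted from Weichsel's 1963 paper and used as a black box, so there is no internal proof to compare against. What you have written is essentially Weichsel's classical argument. The three ingredients --- (i) the bijection between length-$k$ walks in $G_1\times G_2$ and pairs of length-$k$ walks in the factors, (ii) closure of the walk-length set $L_H(x,y)$ under adding $2$, and (iii) the parity-switching detour through an odd cycle, which forces $L_{G_1}(u_1,u_2)$ to contain all sufficiently large integers --- are exactly the standard route, and each step as you describe it goes through. Two minor points worth tightening if you write this out in full: the back-and-forth trick in (ii) needs an edge to traverse, so you should assume both factors are nontrivial (if, say, $G_2=K_1$ then $G_1\times G_2$ is edgeless and the statement fails degenerately; Weichsel implicitly excludes this); and the detour in (iii) uses connectivity of $G_1$ to route from $u_1$ to the odd cycle and back, which you have correctly identified as the one place where a hypothesis beyond ``has an odd cycle'' is consumed.
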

	
	\begin{theorem}{\normalfont\cite{Burton}}\label{thm2}
		The odd prime $p$ has $\frac{p-1}{2}$ quadratic residues and $\frac{p-1}{2}$ quadratic nonresidues in the set $\{1,2,...,p-1\}$.
	\end{theorem}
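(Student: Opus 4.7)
The plan is to count the quadratic residues directly by examining the squares of representatives modulo $p$. By the definition recalled in the introduction, a nonzero residue $a\in\{1,2,\ldots,p-1\}$ is a quadratic residue of $p$ precisely when $a\equiv k^2\pmod{p}$ for some $k$ coprime to $p$. Since every such $k$ reduces to an element of $\{1,2,\ldots,p-1\}$, the set of quadratic residues is exactly
\begin{equation*}
	Q=\{k^2\bmod p:k\in\{1,2,\ldots,p-1\}\},
\end{equation*}
so the problem reduces to showing $|Q|=(p-1)/2$.

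First I would exploit the involution $k\mapsto p-k$ on $\{1,2,\ldots,p-1\}$. Because $(p-k)^2\equiv k^2\pmod{p}$, the values of $k$ and $p-k$ contribute the same residue to $Q$. Since $p$ is odd, $k\neq p-k$ for every $k$ in this range, so the $p-1$ values pair up into $(p-1)/2$ pairs, each producing at most one element of $Q$. This gives the upper bound $|Q|\leq(p-1)/2$.

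The heart of the argument, and the main obstacle, is the matching lower bound: the squares $1^2,2^2,\ldots,\bigl((p-1)/2\bigr)^2$ must be pairwise incongruent modulo $p$. Suppose $i^2\equiv j^2\pmod{p}$ with $1\leq i\leq j\leq(p-1)/2$. Then $p\mid(j-i)(j+i)$, and since $p$ is prime it must divide one of the factors. However, $0\leq j-i<(p-1)/2<p$ and $2\leq j+i\leq p-1<p$, forcing $j-i=0$ and hence $j=i$. This is precisely the step where the primality of $p$ is essential; without it, the factor $j+i$ could share a nontrivial divisor with the modulus and break the argument.

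Combining the two bounds yields $|Q|=(p-1)/2$. The remaining $(p-1)-(p-1)/2=(p-1)/2$ elements of $\{1,2,\ldots,p-1\}$ are therefore quadratic nonresidues of $p$, completing the proof.
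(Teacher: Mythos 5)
The paper does not prove this statement; it is quoted as a known result from the cited number theory text (Burton), so there is no in-paper argument to compare against. Your proposal is the standard, correct proof of this classical fact: the residues of $1^2,2^2,\ldots,\left(\tfrac{p-1}{2}\right)^2$ exhaust the quadratic residues (via the pairing $k\mapsto p-k$) and are pairwise incongruent (via $p\mid(j-i)(j+i)$ and primality), which is essentially the argument given in the reference itself. No gaps.
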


	\begin{theorem}{\normalfont\cite{Burton,Rosen}}\label{thm3}
		Let $p$ be an odd prime. Then
		\begin{equation*}
			(2/p)=\begin{cases}
				-1&\text{ if }p\equiv\pm 3(\text{\normalfont{mod} }8)\\
				1&\text{ if }p\equiv\pm 1(\text{\normalfont{mod} }8).
			\end{cases}
		\end{equation*}
	\end{theorem}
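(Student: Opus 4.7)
The plan is to invoke \emph{Gauss's Lemma}, the classical criterion which asserts that for an odd prime $p$ and an integer $a$ coprime to $p$, one has $(a/p)=(-1)^{n}$, where $n$ is the number of elements of the set $\{a, 2a, 3a, \ldots, \frac{p-1}{2}a\}$ whose least positive residues modulo $p$ exceed $p/2$. Since $p$ is odd, $2$ is certainly coprime to $p$, so this criterion is applicable, and the problem is reduced to a concrete counting argument specialized to $a=2$.

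With $a=2$, the relevant set is $\{2, 4, 6, \ldots, p-1\}$; every element already lies in $\{1, 2, \ldots, p-1\}$, so no reduction modulo $p$ is required. I would then count the even integers $2k$ lying strictly between $p/2$ and $p-1$: these are indexed by $p/4 < k \leq (p-1)/2$, giving
\begin{equation*}
n \;=\; \frac{p-1}{2} \;-\; \left\lfloor \frac{p}{4} \right\rfloor.
\end{equation*}
It remains to determine the parity of $n$, since $(2/p) = (-1)^n$.

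The final step is a four-way case analysis according to $p \pmod 8$. Writing $p = 8m+r$ with $r \in \{1,3,5,7\}$, I would compute $(p-1)/2$ and $\lfloor p/4 \rfloor$ in each case, finding that $n$ is even precisely when $r \in \{1,7\}$ and odd precisely when $r \in \{3,5\}$. Observing that $p \equiv \pm 1 \pmod 8$ corresponds to $r \in \{1,7\}$ and $p \equiv \pm 3 \pmod 8$ corresponds to $r \in \{3,5\}$ then yields the stated formula for $(2/p)$. There is no substantive obstacle in this proof; the result is entirely classical, and the only place demanding care is the bookkeeping on $\lfloor p/4 \rfloor$ across the four residue classes together with the identification of $-1$ with $7$ and of $-3$ with $5$ modulo $8$.
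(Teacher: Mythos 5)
Your proof is correct and is exactly the classical argument: the paper gives no proof of this statement, citing it to Burton and Rosen, and Gauss's Lemma applied to $a=2$ with the count $n=\frac{p-1}{2}-\left\lfloor p/4\right\rfloor$ followed by the four-way residue check is precisely the proof found in those references. (One trivial wording slip: the relevant multiples are those greater than $p/2$ and \emph{up to and including} $p-1$, not strictly between, but your index range $p/4<k\leq(p-1)/2$ already reflects the correct count.)
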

	\newpage
	
	\section{Main Results}
	\tab In the following discussion, $p$ is denoted as an odd prime.
	\begin{theorem}\label{CorGoP}
		Let $G$ be a connected graph of order $n$, $n\geq 2$, and let $p\equiv\pm 3(\text{\normalfont{mod} }8)$. If $G$ has size $n$ or $n\pm 1$, then the corona graph $G\circ P_{p-1}$ is a Legendre cordial graph modulo $p$. 
	\end{theorem}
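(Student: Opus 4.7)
The plan is to construct an explicit Legendre cordial labeling of $G \circ P_{p-1}$. Fix a residue $k \in \{1, \ldots, p-1\}$ (to be chosen depending on $p$) and label each vertex $v_i \in V(G)$ by $f(v_i) = (i-1)p + k$, so every vertex of $G$ has residue $k \pmod p$. For each copy of $P_{p-1}$ attached to $v_i$, assign the remaining block labels $\{(i-1)p + j : 1 \leq j \leq p,\ j \neq k\}$ to the path vertices according to a single permutation $\pi$ of the residues $\{0, 1, \ldots, p-1\} \setminus \{k\}$. Using the same $\pi$ for every copy is legitimate because edge labels depend only on the residues of endpoints.

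Edges then fall into three classes. The $|E(G)|$ edges of $G$ each have sum residue $2k$; by Theorem~\ref{thm3} we have $(2/p) = -1$, so $(2k/p) = -(k/p)$, and every such edge receives the same label. For each copy, the $p-1$ join edges have sum residues forming exactly $\{0, 1, \ldots, p-1\} \setminus \{2k\}$, which contribute either $(p-1)/2$ label-$1$ and $(p-1)/2$ label-$0$ edges (when $2k$ is an NQR) or $(p-3)/2$ label-$1$ and $(p+1)/2$ label-$0$ edges (when $2k$ is a QR). The $p-2$ internal edges per copy are determined entirely by $\pi$.

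The balancing target is to choose $\pi$ so that each copy contributes $(p-1)/2$ label-$1$ and $(p-3)/2$ label-$0$ internal edges, i.e., the per-copy surplus $D := e_1 - e_0$ equals $1$. A direct count then shows that $e_{f_p^*}(0) - e_{f_p^*}(1) = \pm (|E(G)| - n)$, which lies in $\{-1, 0, 1\}$ under the hypothesis $|E(G)| \in \{n-1, n, n+1\}$, confirming the Legendre cordial condition. The main obstacle will be establishing the existence of such a permutation $\pi$: an arrangement of $\{0, 1, \ldots, p-1\} \setminus \{k\}$ along a path of length $p-1$ whose $p-2$ consecutive sums contain exactly $(p-1)/2$ quadratic residues modulo $p$. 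I would handle this by explicit construction, selecting $k$ so that $2k$ lies in the complementary QR/NQR class via $(2/p) = -1$, and then threading the residue $0$ together with residue pairs summing to a fixed QR so that consecutive sums produce the required mix. For the degenerate case $p = 3$, where $P_{p-1} = P_2$ is a single edge, the labeling with $k = 2$ reduces to a direct verification.
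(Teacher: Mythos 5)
Your framework is sound and runs parallel to the paper's: you put a fixed residue $k$ on every vertex of $G$, distribute the remaining residues over each attached path by one fixed permutation $\pi$, and your accounting of the three edge classes (edges of $G$ with sum $2k$, join edges with sums $\{0,1,\ldots,p-1\}\setminus\{2k\}$, and $p-2$ internal path edges per copy) is correct, as is the conclusion that $e_{f_p^*}(0)-e_{f_p^*}(1)=\pm(|E(G)|-n)$ once each copy contributes $\frac{p-1}{2}$ label-$1$ and $\frac{p-3}{2}$ label-$0$ internal edges. However, you have not actually produced the permutation $\pi$, and that existence claim is the entire content of the theorem --- it is precisely where the hypothesis $p\equiv\pm3\ (\mathrm{mod}\ 8)$ must enter. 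Your sketched method (pairing residues so that each pair sums to a fixed quadratic residue $2k$ and ``threading'' them along the path) gives you $\frac{p-1}{2}$ within-pair edges labelled $1$, but it then silently requires \emph{all} $\frac{p-3}{2}$ linking sums between consecutive pairs to be nonresidues or zero; that is an unproven combinatorial assertion, and nothing in your writeup shows such an ordering of the pairs exists for general $p$.

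The paper closes exactly this gap with a much simpler choice that you should compare against: take $k=\frac{p+1}{2}$ and place the path residues in cyclically consecutive order $\frac{p+3}{2},\frac{p+5}{2},\ldots,p-1,0,1,\ldots,\frac{p-1}{2}$. The $p-2$ consecutive sums are then $\{4,6,\ldots,p-1\}\cup\{1\}\cup\{3,5,\ldots,p-2\}=\{1,2,\ldots,p-1\}\setminus\{2\}$, and since $(2/p)=-1$ the omitted element is a nonresidue, so by Theorem~\ref{thm2} the internal edges automatically carry $\frac{p-1}{2}$ quadratic residues and $\frac{p-3}{2}$ nonresidues --- no case analysis on $(k/p)$ and no pairing argument needed. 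Until you exhibit a concrete $\pi$ with the stated property (or adopt this consecutive-residue ordering), your argument reduces the theorem to an unresolved claim rather than proving it.
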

	
	\begin{proof}
		According to Theorem~\ref{thm3}, $(2/p)=-1$. Suppose that $V(G)=\{v_1,v_2,...,v_n\}$ and $E(G)=\{\varepsilon_1,\varepsilon_2,...,\varepsilon_q\}$, $q\in\{n-1,n,n+1\}$. Let $P_{p-1}^i$ denote the $i$th copy of $P_{p-1}$ with $V(P_{p-1}^i)=\{u_1^i,u_2^i,...,u_{p-1}^i\}$, where $u_j^iv_i$ is an edge of $G\circ P_{p-1}$ for $j=1,2,...,p-1$ and $i=1,2,...,n$. Let $f:V(G\circ P_{p-1})\to \{1,2,...,np\}$ be a bijective function defined by
		\begin{equation*}
			f(u_j^i)=\begin{cases}
				j+\frac{p+1}{2}+p(i-1)&\text{ for } j=1,2,...,\frac{p-1}{2}\\
				j-\frac{p-1}{2}+p(i-1)&\text{ for } j=\frac{p+1}{2},\frac{p+3}{2},...,p-1,
			\end{cases}
		\end{equation*}
		and
		\begin{equation*}
			f(v_i)=\frac{p+1}{2}+p(i-1)
		\end{equation*}
		for $i=1,2,...,n$. 
		
		For the edges of $P_{p-1}^i$:
		\begin{equation*}
			f(u_j^i)+f(u_{j+1}^i)\equiv\begin{cases}
				2(j+1)(\text{mod }p)&\text{ for }j=1,2,...,\frac{p-3}{2}\\
				[2(j+1)-p](\text{mod }p)&\text{ for }\frac{p-1}{2},\frac{p+1}{2},...,p-2
			\end{cases}
		\end{equation*}
		for $i=1,2,...,n$. Given this labeling, for each $i=1,2,...,n$, let
		\begin{equation*}
			\alpha_i=\bigcup_{j=1}^{p-2}\{\xi:f(u_j^i)+f(u_{j+1}^i)\equiv \xi (\text{mod }p),\text{ }0<\xi<p\}
		\end{equation*}
		and consequently,
		\begin{equation*}
			\alpha_i=\{1\}\cup \{3,4,...,p-1\}.
		\end{equation*}
		By Theorem~\ref{thm2}, since $(2/p)=-1$ and $2\notin \alpha_i$, it follows that $p$ has $\frac{p-1}{2}$ quadratic residues and $\frac{p-3}{2}$ quadratic nonresidues in $\alpha_i$, for $i=1,2,...,n$.
		
		For the edges of the form $u_j^iv_i$:
		\begin{equation*}
			f(u_j^i)+f(v_i)\equiv(j+1)(\text{mod }p), \text{ for }j=1,2,...,p-2
		\end{equation*}
		and
		\begin{equation*}
			f(u_{p-1}^i)+f(v_i)\equiv 0(\text{mod }p)
		\end{equation*}
		for $i=1,2,...,n$. In view of this labeling, for each $i=1,2,...,n$, define
		\begin{equation*}
			\beta_i=\bigcup_{j=1}^{p-2}\{\xi:f(u_j^i)+f(v_i)\equiv\xi(\text{mod }p), \text{ }0<\xi<p\}.
		\end{equation*}
		Then 
		\begin{equation*}
			\beta_i=\{2,3,...,p-1\}.
		\end{equation*}
		Since $(1/p)=1$ and $1\notin \beta_i$, it follows by Theorem~\ref{thm2} that $p$ has $\frac{p-3}{2}$ quadratic residues and $\frac{p-1}{2}$ quadratic nonresidues in $\beta_i$, for $i=1,2,...,n$. Moreover, observe that
		\begin{equation*}
			f_p^{*}(u_{p-1}^iv_i)=0
		\end{equation*}
		for $i=1,2,...,n$.
		
		Lastly, for the edges of $G$: Because $\varepsilon_k = v_av_b$, for some $a,b\in \{1,2,...,n\}$, $a\neq b$, then 
		\begin{equation*}
			f(v_a)+f(v_b)\equiv 1(\text{mod }p)
		\end{equation*}
		and it follows that
		\begin{equation*}
			f_p^*(\varepsilon_k) = 1
		\end{equation*}
		for $k=1,2,...,q$. 
		
		From the above results, we conclude that
		\begin{equation*}
			e_{f_p^*}(0)=n\left(\frac{p-3}{2}\right)+n\left(\frac{p-1}{2}\right)+n
		\end{equation*}
		and 
		\begin{equation*}
			e_{f_p^*}(1)=n\left(\frac{p-1}{2}\right)+n\left(\frac{p-3}{2}\right)+q.
		\end{equation*}
		Since $q\in \{n-1,n,n+1\}$, we obtain $|e_{f_p^*}(0)-e_{f_p^*}(1)|\leq 1$. Hence, $G\circ P_{p-1}$ is a Legendre cordial graph modulo $p$.
	\end{proof}

	\begin{theorem}
		The tensor product graph $K_p\times G$, where $G$ is a connected bipartite graph, is a Legendre cordial graph modulo $p$.
	\end{theorem}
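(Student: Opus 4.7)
The plan is to use the bipartition of $G$ to design a labeling whose edge sums modulo $p$ are always nonzero and distribute uniformly over $\{1,\ldots,p-1\}$, producing a perfect Legendre split. Since $p \ge 3$, the graph $K_p$ contains a triangle, so by Theorem~\ref{thm1} $K_p \times G$ is connected and the definition of Legendre cordial labeling applies. Write $V(K_p) = \{w_1,\ldots,w_p\}$, $V(G) = \{u_1,\ldots,u_m\} = A \cup B$, and define $f : V(K_p \times G) \to \{1,2,\ldots,pm\}$ by assigning to the $i$-th column $\{(w_j,u_i) : 1 \le j \le p\}$ the consecutive block $\{p(i-1)+1,\ldots,pi\}$, arranged so that $f((w_j,u_i)) \equiv j \pmod{p}$ when $u_i \in A$ and $f((w_j,u_i)) \equiv -j \pmod{p}$ when $u_i \in B$. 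Explicitly: $f((w_j,u_i)) = p(i-1) + j$ for $u_i \in A$, while for $u_i \in B$ put $f((w_j,u_i)) = p(i-1) + (p-j)$ for $j < p$ and $f((w_p,u_i)) = pi$. Disjointness of the blocks makes $f$ a bijection.

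Next I would analyze an arbitrary edge. Every edge of $K_p \times G$ has the form $(w_j,u_a)(w_k,u_b)$ with $j \neq k$ and $u_au_b \in E(G)$; bipartiteness forces one of $u_a,u_b$ into each part, so without loss of generality $u_a \in A$ and $u_b \in B$. By construction, $f((w_j,u_a)) + f((w_k,u_b)) \equiv j - k \pmod{p}$, and since $j,k \in \{1,\ldots,p\}$ with $j \neq k$ this residue lies in $\{1,2,\ldots,p-1\}$ and is never congruent to $0$. This is the crucial feature: the ``zero-sum'' exception in the definition of $f_p^*$ is never triggered, so every edge is labeled by the pure Legendre symbol of a nonzero residue.

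Finally I would count per $G$-edge. For a fixed edge $u_au_b \in E(G)$, the $p(p-1)$ edges of $K_p \times G$ projecting onto it are parametrized by ordered pairs $(j,k)$ with $j \neq k$, and for each residue $s \in \{1,\ldots,p-1\}$ exactly $p$ such pairs satisfy $j - k \equiv s \pmod{p}$. Theorem~\ref{thm2} now supplies $(p-1)/2$ quadratic residues and $(p-1)/2$ quadratic nonresidues among these values of $s$, so this $G$-edge contributes exactly $p(p-1)/2$ edges labeled $1$ and $p(p-1)/2$ edges labeled $0$ to $K_p \times G$. Summing over all edges of $G$ yields $e_{f_p^*}(0) = e_{f_p^*}(1)$, whence $|e_{f_p^*}(0) - e_{f_p^*}(1)| = 0 \le 1$.

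The main obstacle is choosing the labeling. Any uniform rule (for instance $f((w_j,u_i)) = p(i-1)+j$ on every column) produces sums $j + k \pmod{p}$, which hits residue $0$ and creates an unavoidable imbalance of order $(p-1)|E(G)|$ in favor of label $0$. The bipartite reflection $j \mapsto p - j$ on one side converts the sum into the difference $j - k$, which vanishes modulo $p$ only when $j = k$, precisely the case excluded by the tensor product. Once this observation is in place, the rest of the argument is a standard uniform-distribution count.
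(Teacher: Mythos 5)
Your proposal is correct and follows essentially the same route as the paper: both label the columns over one side of the bipartition by $j \pmod p$ and the columns over the other side by $-j \pmod p$, so that every edge sum becomes a nonzero difference $j-k$ sweeping $\{1,\ldots,p-1\}$ uniformly, and then invoke Theorem~\ref{thm2} for the exact half-and-half split. The only difference is cosmetic (the paper places all $V_1$-blocks before the $V_2$-blocks in the range of $f$, while you interleave by column index), which does not affect the argument.
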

	
	\begin{proof}
		Let $V(K_p)=\{v_1,v_2,...,v_p\}$ and let $m$ be the size of $G$. Since $G$ is a bipartite graph, the vertex set $V(G)$ can be partitioned into two non-empty subsets $V_1$ and $V_2$ with $V_k=\{u_1^k,u_2^k,...,u_{r_k}^k\}$ for $k=1,2$. Thus,
		\begin{equation*}
			E(K_p\times G)=\bigcup_{u_x^1u_y^2\in E(G)}\{(v_iu_x^1)(v_j,u_y^2):i,j=1,2,...,p, \text{ }i\neq j\}.
		\end{equation*}
		
		Now, let $f:V(K_p\times G)\to \{1,2,...,p(r_1+r_2)\}$ be a bijective function defined as follows: For $s_k=1,2,...,r_k$,
		\begin{equation*}
			f((v_t,u_{s_k}^k))=\begin{cases}
				t+p(s_1-1)&\text{ for }k=1\text{ and }t=1,2,...,p\\
				p-t+p(s_2+r_1-1)&\text{ for }k=2\text{ and }t=1,2,...,p-1\\
				p+p(s_2+r_1-1)&\text{ for }k=2\text{ and }t=p.
			\end{cases}
		\end{equation*}
		Consequently, for $j=1,2,...,p$,
		\begin{equation*}
			f((v_j,u_x^1))+f((v_i,u_y^2))\equiv\begin{cases}
				(j-i)(\text{mod }p)&\text{ for }i=1,2,...,j-1,\\
				(j+p-i)(\text{mod }p)&\text{ for }i=j+1,j+2,...,p-1,\\
				j(\text{mod }p)&\text{ for }i=p,\text{ }j\neq p
			\end{cases}
		\end{equation*}
		where $u_x^1u_y^2\in E(G)$. Given this labeling, for each $j=1,2,...,p$, define
		\begin{equation*}
			\alpha_{u_x^1u_y^2}^j=\bigcup_{\substack{i=1\\i\neq j}}^p\{\xi:f((v_j,u_x^1))+f((v_i,u_y^2))\equiv\xi(\text{mod }p),\text{ }0<\xi<p\}
		\end{equation*}
		where $u_x^1u_y^2\in E(G)$. It follows that 
		\begin{equation*}
			\alpha_{u_x^1u_y^2}^j=\{1,2,...,p-1\}
		\end{equation*}
		for all $j=1,2,...,p$ and $u_x^1u_y^2\in E(G)$. By Theorem~\ref{thm2}, we have
		\begin{equation*}
			e_{f_p^*}(0)=e_{f_p^*}(1)= mp\left(\frac{p-1}{2}\right)
		\end{equation*}
		and hence $|e_{f_p^*}(0)-e_{f_p^*}(1)|=0$. Therefore, $K_p\times G$ is a Legendre cordial graph modulo $p$.
	\end{proof}
	In the following results, let $G_1$ and $G_2$ be graphs of orders $q_1$ and $q_2$, respectively. For each $i=1,2$, let $g_i:V(G_i)\to \{1,2,...,q_i\}$ be a bijective function. Define the following subsets of $E(G_i)$:
	\begin{equation}\label{eqvarrho}
		\varrho_i=\{ab\in E(G_i):([g_i(a)+g_i(b)]/p)=1\}
	\end{equation}
	and
	\begin{equation}\label{eqeta}
		\eta_i=\{ab\in E(G_i):([g_i(a)+g_i(b)]/p)=-1\text{ or }g_i(a)+g_i(b)\equiv 0(\text{mod }p)\}
	\end{equation}
	for $i=1,2$.
	
	In the case where the range of $g_i$ $(i=1,2)$ consists of $k_ip$ elements, that is, $G_i$ has order $k_ip$, where $k_i$ is a positive integer, we define the partition of range into subsets
	\begin{equation*}
		\lambda_t^i=\{tp+1,tp+2,...,tp+p\}
	\end{equation*}
	for $t=0,1,...,k_i-1$. Correspondingly, for each $t=0,1,...,k_i-1$, define 
	\begin{equation*}
		D_t^i=\{a\in V(G_i):g_i(a)\in\lambda_t^i\}.
	\end{equation*}
	\begin{theorem}\label{join}
		Let $G_1$ and $G_2$ be graphs of orders $np$ and $m$, respectively. If
		\begin{equation}\label{eq0.1}
			|\varrho_1|+|\varrho_2|=|\eta_1|+|\eta_2|+nm\pm1
		\end{equation}
		or
		\begin{equation}\label{eq0.2}
			|\varrho_1|+|\varrho_2|=|\eta_1|+|\eta_2|+nm,
		\end{equation}
		then the join graph $G_1+G_2$ is a Legendre cordial graph modulo $p$.
	\end{theorem}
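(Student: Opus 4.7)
The plan is to extend the given bijections $g_1$ and $g_2$ into a single bijective labeling $f$ on $V(G_1+G_2)$ and then exploit the fact that each block $D_t^1$ of the partition of $V(G_1)$ yields a complete residue system modulo $p$. Specifically, I would define $f(a)=g_1(a)$ for $a\in V(G_1)$ and $f(b)=g_2(b)+np$ for $b\in V(G_2)$. Since $g_1$ and $g_2$ are bijections onto $\{1,\ldots,np\}$ and $\{1,\ldots,m\}$ respectively, $f$ is a bijection onto $\{1,\ldots,np+m\}$, as required.

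Next I would handle the three edge types of the join separately. Edges in $E(G_1)$ are unchanged, so by the definitions \eqref{eqvarrho} and \eqref{eqeta} they contribute $|\varrho_1|$ edges labeled $1$ and $|\eta_1|$ edges labeled $0$. For an edge $ab\in E(G_2)$, the shift by $np$ gives $f(a)+f(b)=g_2(a)+g_2(b)+2np\equiv g_2(a)+g_2(b)\pmod p$; since the Legendre symbol and divisibility by $p$ depend only on residues mod $p$, these edges contribute $|\varrho_2|$ ones and $|\eta_2|$ zeros.

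The central step is the count on the join edges. Fix any $v\in V(G_2)$; it is adjacent to every $u\in V(G_1)$, giving $np$ edges. Using the partition $V(G_1)=D_0^1\cup D_1^1\cup\cdots\cup D_{n-1}^1$, I would observe that $\lambda_t^1=\{tp+1,\ldots,tp+p\}$ is a complete residue system modulo $p$ (since $tp+p\equiv 0$), so as $u$ runs through $V(G_1)$ the residue $f(u)+f(v)\pmod p$ assumes each value in $\{0,1,\ldots,p-1\}$ exactly $n$ times, independently of $f(v)$. Among these, $n$ sums are $\equiv 0\pmod p$ (label $0$), and by Theorem~\ref{thm2} the remaining $n(p-1)$ sums split evenly into $n(p-1)/2$ quadratic residues (label $1$) and $n(p-1)/2$ quadratic nonresidues (label $0$). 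Summing over all $m$ vertices of $V(G_2)$, the join edges contribute $mn(p-1)/2$ ones and $mn(p+1)/2$ zeros.

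Finally, combining gives
\begin{equation*}
e_{f_p^*}(1)-e_{f_p^*}(0)=|\varrho_1|+|\varrho_2|-|\eta_1|-|\eta_2|-nm,
\end{equation*}
which equals $\pm1$ under \eqref{eq0.1} and $0$ under \eqref{eq0.2}, yielding $|e_{f_p^*}(0)-e_{f_p^*}(1)|\leq1$ in either case. The main obstacle is the uniform-distribution argument in the third paragraph: one must confirm that the residue class of each $v\in V(G_2)$ is irrelevant, because translating a complete residue system by any fixed amount still produces a complete residue system, and therefore the $n$-fold count per residue holds for every choice of $v$. Once that observation is in place, the rest is bookkeeping against the two given hypotheses.
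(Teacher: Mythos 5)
Your proposal is correct and follows essentially the same route as the paper: the identical labeling $f(a)=g_1(a)$, $f(b)=g_2(b)+np$, the same three-way edge decomposition, and the same key observation that each block $D_t^1$ yields a complete residue system modulo $p$ so the join edges contribute $nm(p-1)/2$ ones and $nm(p+1)/2$ zeros. The final bookkeeping against the two hypotheses matches the paper's conclusion exactly.
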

	
	\begin{proof}
		Let $V(G_1)=\{v_1,v_2,...,v_{np}\}$ and $V(G_2)=\{u_1,u_2,...,u_m\}$. Define the bijective function $f:V(G_1+G_2)\to\{1,2,...,np+m\}$ by
		\begin{equation*}
			f(v_i)=g_1(v_i),\text{ for }i=1,2,...,np
		\end{equation*}
		and
		\begin{equation*}
			f(u_j)=g_2(u_j)+np,\text{ for }j=1,2,...,m.
		\end{equation*}
		
		For the edges of $G_k$: If $ab\in E(G_k)$, then 
		\begin{equation*}
			f(a)+f(b)\equiv [g_k(a)+g_k(b)](\text{mod }p),
		\end{equation*}
		which means
		\begin{equation}\label{eq1}
			f_p^*(ab)=\begin{cases}
				0&\text{ if }ab\in \eta_k\\
				1&\text{ if }ab\in \varrho_k
			\end{cases}
		\end{equation}
		for each $k=1,2$.
		
		For the remaining edges:
		\begin{equation}\label{eq2}
			f(v_i)+f(u_j)\equiv [g_1(v_i)+g_2(u_j)](\text{mod }p)
		\end{equation}
		for $i=1,2,...,np$ and $j=1,2,...,m$. 
		
		Now, for $j=1,2,...,m$ and $t=0,1,...,n-1$, let 
		\begin{equation*}
			\alpha_t^j=\bigcup_{a\in D_t^1}\{\xi:f(a)+f(u_j)\equiv\xi(\text{mod }p), \text{ }0\leq\xi<p\}
		\end{equation*}
		and by (\ref{eq2}), it follows that 
		\begin{equation}\label{eq3}
			\alpha_t^j=\{0\}\cup\{1,2,...,p-1\}.
		\end{equation}
		In accordance with (\ref{eq1}) and (\ref{eq3}), and Theorem~\ref{thm2}, we have
		\begin{equation*}
			e_{f_p^*}(0)=|\eta_1|+|\eta_2|+nm\left(\frac{p-1}{2}\right)+nm
		\end{equation*}
		and
		\begin{equation*}
			e_{f_p^*}(1)=|\varrho_1|+|\varrho_2|+nm\left(\frac{p-1}{2}\right).
		\end{equation*}
		By (\ref{eq0.1}) and (\ref{eq0.2}), it is immediate that $|e_{f_p^*}(0)-e_{f_p^*}(1)|\leq 1$, and so $G_1+G_2$ is a Legendre cordial graph modulo $p$.
	\end{proof}
	
	\begin{theorem}\label{corona}
		Let $G_1$ be a connected graph of order $n$ and $G_2$ be a graph of order $mp$. If
		\begin{equation}\label{eq7.1}
			|\varrho_1|+n|\varrho_2|=|\eta_1|+n|\eta_2|+nm\pm1
		\end{equation}
		or
		\begin{equation}\label{eq7.2}
			|\varrho_1|+n|\varrho_2|=|\eta_1|+n|\eta_2|+nm,
		\end{equation}
		then the corona graph $G_1\circ G_2$ is a Legendre cordial graph modulo $p$.
	\end{theorem}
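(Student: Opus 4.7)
The plan mirrors the join-graph argument in Theorem~\ref{join}: extend the given bijections $g_1$ and $g_2$ to a single bijection $f$ on $V(G_1\circ G_2)$ by shifts that are multiples of $p$, so that residue-class information, and hence the Legendre labels, on internal edges are inherited unchanged. Concretely, writing $V(G_1)=\{v_1,\ldots,v_n\}$ and $V(G_2^i)=\{u_1^i,\ldots,u_{mp}^i\}$ for the $i$th copy of $G_2$, I would set
\begin{equation*}
f(u_j^i)=g_2(u_j)+(i-1)mp\quad\text{and}\quad f(v_i)=g_1(v_i)+nmp.
\end{equation*}
Because $(i-1)mp$ and $nmp$ are divisible by $p$, endpoint sums on edges internal to $G_1$ or to any copy $G_2^i$ are congruent modulo $p$ to the corresponding sums under $g_1$ or $g_2$. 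Thus these edges contribute exactly $|\eta_1|+n|\eta_2|$ zeros and $|\varrho_1|+n|\varrho_2|$ ones to $f_p^*$.

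The only new calculation concerns the $nmp$ corona edges $u_j^iv_i$. Modulo $p$ their endpoint sums reduce to $g_2(u_j)+g_1(v_i)$. Since $g_2$ bijects onto $\{1,\ldots,mp\}$, which splits into the blocks $\lambda_t^2=\{tp+1,\ldots,tp+p\}$ for $t=0,\ldots,m-1$, the residues $g_2(u_j)\pmod{p}$ hit each value in $\{0,1,\ldots,p-1\}$ exactly $m$ times. Fixing $i$, the shift by $g_1(v_i)$ merely permutes these residues, so $g_2(u_j)+g_1(v_i)\pmod{p}$ also attains each residue exactly $m$ times as $j$ varies. Applying Theorem~\ref{thm2} then gives $m(p-1)/2$ corona edges labeled $1$ and $m(p-1)/2+m=m(p+1)/2$ corona edges labeled $0$ per copy, where the extra $m$ comes from the residue $0$ case.

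Summing the three contributions yields
\begin{equation*}
e_{f_p^*}(0)-e_{f_p^*}(1)=\bigl(|\eta_1|+n|\eta_2|\bigr)-\bigl(|\varrho_1|+n|\varrho_2|\bigr)+nm,
\end{equation*}
and hypothesis (\ref{eq7.1}) or (\ref{eq7.2}) forces this difference into $\{-1,0,1\}$, giving the cordial condition. The most delicate step I expect is the residue-uniformity claim on the corona edges: the fact that $g_2(u_j)+g_1(v_i)\pmod{p}$ realizes each residue exactly $m$ times relies crucially on the order of $G_2$ being a multiple of $p$, and it is precisely this structural requirement that explains why the hypothesis is phrased in terms of $|\varrho_1|+n|\varrho_2|$ and $|\eta_1|+n|\eta_2|+nm$. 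Everything else reduces to a bijectivity check on $f$, which follows immediately since the three ranges $\{(i-1)mp+1,\ldots,imp\}$ and $\{nmp+1,\ldots,nmp+n\}$ are pairwise disjoint and cover $\{1,\ldots,n(mp+1)\}$.
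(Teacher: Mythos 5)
Your proposal is correct and follows essentially the same route as the paper: the identical shift-by-multiples-of-$p$ labeling, the same inheritance of labels on edges internal to $G_1$ and to each copy of $G_2$, and the same residue-counting on the $nmp$ corona edges via Theorem~\ref{thm2}. Your explicit justification that each residue class is hit exactly $m$ times per copy is a slightly more careful rendering of the paper's $\alpha_t^i$ computation, but the argument is the same.
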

	
	\begin{proof}
		Let $V(G_1)=\{v_1,v_2,...,v_n\}$ and $V(G_2)=\{u_1,u_2,...,u_{mp}\}$. Suppose that $G_2^i$ is denoted as $i$th copy of $G_2$ with $V(G_2^i)=\{u_1^i,u_2^i,...,u_{mp}^i\}$ for which each vertex of $G_2^i$ is adjacent to $v_i$, for $i=1,2,...,n$. Now, let $f:V(G_1\circ G_2)\to\{1,2,...,n+nmp\}$ be a bijective function defined by
		\begin{equation*}
			f(u_j^i)=g_2(u_j^i)+mp(i-1),\text{ for }j=1,2,...,mp
		\end{equation*}
		and
		\begin{equation*}
			f(v_i)=g_1(v_i)+nmp
		\end{equation*}
		for $i=1,2,...,n$.
		
		For the edges of $G_1$: If $ab\in E(G_1)$, then
		\begin{equation*}
			f(a)+f(b)\equiv [g_1(a)+g_1(b)](\text{mod }p)
		\end{equation*}
		and consequently,
		\begin{equation}\label{eq4}
			f_p^*(ab)=\begin{cases}
				0&\text{ if }ab\in \eta_1\\
				1&\text{ if }ab\in \varrho_1.
			\end{cases}
		\end{equation}
		
		Similarly, for the edges of $G_2^i$: If $a^ib^i\in E(G_2^i)$, then
		\begin{equation*}
			f(a^i)+f(b^i)\equiv [g_2(a)+g_2(b)](\text{mod }p)
		\end{equation*}
		and as a result
		\begin{equation}\label{eq5}
			f_p^*(a^ib^i)=\begin{cases}
				0&\text{ if }ab\in \eta_2\\
				1&\text{ if }ab\in \varrho_2
			\end{cases}
		\end{equation}
		for $i=1,2,...,n$. 
		
		For the remaining edges:
		\begin{equation}\label{eq6}
			f(v_i)+f(u_j^i)\equiv [g_1(v_i)+g_2(u_j)](\text{mod }p)
		\end{equation}
		for $i=1,2,...,n$ and $j=1,2,...,mp$. 
		
		Now, for every $i=1,2,...,n$ and $t=0,1,...,m-1$, define
		\begin{equation*}
			\alpha_t^i=\bigcup_{a\in D_t^2}\{\xi:f(v_i)+f(a^i)\equiv\xi(\text{mod }p),\text{ }0\leq\xi<p\}
		\end{equation*}
		and by using (\ref{eq6}), we have
		\begin{equation}\label{eq7}
			\alpha_t^i=\{0\}\cup \{1,2,...,p-1\}.
		\end{equation}
		By combining (\ref{eq4}), (\ref{eq5}), and (\ref{eq7}), and applying Theorem~\ref{thm2}, we obtain
		\begin{equation*}
			e_{f_p^*}(0)=|\eta_1|+n|\eta_2|+nm\left(\frac{p-1}{2}\right)+nm
		\end{equation*}
		and
		\begin{equation*}
			e_{f_p^*}(1)=|\varrho_1|+n|\varrho_2|+nm\left(\frac{p-1}{2}\right).
		\end{equation*}
		Thus, (\ref{eq7.1}) and (\ref{eq7.2}) implies that $|e_{f_p^*}(0)-e_{f_p^*}(1)|\leq 1$. Hence, $G_1\circ G_2$ is a Legendre cordial graph modulo $p$.
	\end{proof}
	
	\begin{theorem}\label{lexico}
		Let $G_1$ be a connected graph of order $n$ and size $n$. Also, let $G_2$ be a graph of order $mp$. If
		\begin{equation*}
			|\varrho_2|=|\eta_2|+m^2p,
		\end{equation*}
		then the lexicographic product graph $G_1[G_2]$ is a Legendre cordial graph modulo $p$.
	\end{theorem}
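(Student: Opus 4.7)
My plan is to extend the shifted-block labeling strategy from Theorems \ref{join} and \ref{corona}. Writing $V(G_1) = \{v_1,\ldots,v_n\}$ and $V(G_2) = \{u_1,\ldots,u_{mp}\}$, I set $f((v_i,u_j)) = g_2(u_j) + mp(i-1)$, which is a bijection from $V(G_1[G_2])$ onto $\{1,\ldots,nmp\}$. Because any shift by a multiple of $mp$ vanishes modulo $p$, for each edge $(v_i,u_a)(v_i,u_b)$ inside the $i$th copy of $G_2$ we have $f((v_i,u_a)) + f((v_i,u_b)) \equiv g_2(u_a) + g_2(u_b) \pmod{p}$. Consequently, the within-copy edges contribute exactly $n|\eta_2|$ edges of label $0$ and $n|\varrho_2|$ edges of label $1$, replicating the $(\eta_2,\varrho_2)$-split in each of the $n$ copies.

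For the $nm^2p^2$ edges running between adjacent copies, I will work block-by-block. Fix an edge $v_av_b \in E(G_1)$ and indices $s,t \in \{0,1,\ldots,m-1\}$. For every pair $(u_x,u_y) \in D_s^2 \times D_t^2$, the edge $(v_a,u_x)(v_b,u_y)$ belongs to $G_1[G_2]$, and these $p^2$ pairs yield $p^2$ distinct edges since $v_a\neq v_b$. Because $g_2$ restricted to $D_s^2$ takes every residue modulo $p$ exactly once and likewise on $D_t^2$, the sum $g_2(u_x) + g_2(u_y)$ assumes each residue modulo $p$ exactly $p$ times as $(u_x,u_y)$ ranges over $D_s^2 \times D_t^2$. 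By Theorem \ref{thm2}, among these $p^2$ pairs there are $p$ with $g_2(u_x)+g_2(u_y) \equiv 0 \pmod{p}$ (label $0$), $p(p-1)/2$ corresponding to quadratic nonresidues (label $0$), and $p(p-1)/2$ corresponding to quadratic residues (label $1$), giving $p(p+1)/2$ edges of label $0$ and $p(p-1)/2$ edges of label $1$ per block.

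Summing these contributions over all $m^2$ block-pairs $(s,t)$ and the $n$ edges of $G_1$, the between-copy edges contribute $nm^2\cdot p(p+1)/2$ edges of label $0$ and $nm^2\cdot p(p-1)/2$ edges of label $1$. Adding the within-copy totals yields
\begin{equation*}
e_{f_p^*}(0) = n|\eta_2| + nm^2\,\tfrac{p(p+1)}{2}, \qquad e_{f_p^*}(1) = n|\varrho_2| + nm^2\,\tfrac{p(p-1)}{2},
\end{equation*}
so $e_{f_p^*}(1)-e_{f_p^*}(0) = n\bigl(|\varrho_2| - |\eta_2| - m^2p\bigr)$, which is $0$ by the hypothesis $|\varrho_2|=|\eta_2|+m^2p$. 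The step I expect to be the main obstacle is the block-by-block counting: I must argue carefully that the $p^2$ pairs in $D_s^2\times D_t^2$ genuinely produce $p^2$ distinct edges (which relies on $v_a\neq v_b$) and that the sum $g_2(u_x)+g_2(u_y)$ is uniformly distributed modulo $p$ on each product block, as this uniformity is exactly what permits a clean application of Theorem \ref{thm2} to obtain the $p(p+1)/2$ versus $p(p-1)/2$ split.
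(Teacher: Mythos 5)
Your proof is correct and follows essentially the same route as the paper: the same residue-preserving labeling (you write the shift as $mp(i-1)$, which is the bijection the paper evidently intends where it prints $p(i-1)$), the same split into within-copy and between-copy edges, and the same appeal to Theorem~\ref{thm2} through the blocks $D_t^2$. The only cosmetic difference is that you count the between-copy edges in packets of $p^2$ (the products $D_s^2\times D_t^2$), whereas the paper fixes one endpoint $u_j$ and sweeps a single block of $p$ vertices at a time; the resulting totals $n|\eta_2|+nm^2p\left(\frac{p-1}{2}\right)+nm^2p$ and $n|\varrho_2|+nm^2p\left(\frac{p-1}{2}\right)$ are identical.
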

	
	\begin{proof}
		Let $V(G_1)=\{v_1,v_2,...,v_n\}$ and $V(G_2)=\{u_1,u_2,...,u_{mp}\}$. So,
		\begin{equation*}
			E(G_1[G_2])=\bigcup_{xy\in E(G_1)}\{(x,u_i)(y,u_j):i,j=1,2,...,mp\}.
		\end{equation*}
		Suppose that $f:V(G_1[G_2])\to \{1,2,...,nmp\}$ is a bijective function defined by
		\begin{equation*}
			f((v_i,u_j))=g_2(u_j)+p(i-1)
		\end{equation*}
		for $j=1,2,...,mp$ and $i=1,2,...,n$.
		
		For the edges of the form $(v_i,a)(v_i,b)$ where $ab\in E(G_2)$: 
		\begin{equation*}
			f((v_i,a))+f((v_i,b))\equiv [g_2(a)+g_2(b)](\text{mod }p)
		\end{equation*}
		and so
		\begin{equation}\label{eq8}
			f_p^*((v_i,a)(v_i,b))=\begin{cases}
				0&\text{ if }ab\in\eta_2\\
				1&\text{ if }ab\in\varrho_2
			\end{cases}
		\end{equation}
		for $i=1,2,...,n$.
		
		For the rest of the edges: 
		\begin{equation}\label{eq9}
			f((x,u_j))+f((y,u_k))\equiv[g_2(u_j)+g_2(u_k)](\text{mod }p)
		\end{equation}
		for $j,k=1,2,...,mp$ and $xy \in E(G_1)$.
		
		Next, for each $j=1,2,...,mp$, $t=0,1,...,m-1$, and $xy\in E(G_1)$, define the set
		\begin{equation*}
			\alpha_{j,t}^{xy}=\bigcup_{a\in D_t^2}\{\xi:f((x,u_j))+f((y,a))\equiv \xi (\text{mod }p),\text{ }0\leq \xi <p\}
		\end{equation*}
		which, by (\ref{eq9}), simplifies to
		\begin{equation}\label{eq10}
			\alpha_{j,t}^{xy}=\{0\}\cup\{1,2,...,p-1\}.
		\end{equation}
		
		Therefore, using (\ref{eq8}) and (\ref{eq10}), along with Theorem~\ref{thm2}, we obtain
		\begin{equation*}
			e_{f_p^*}(0)=n|\eta_2|+nm^2p\left(\frac{p-1}{2}\right)+nm^2p
		\end{equation*}
		and
		\begin{equation*}
			e_{f_p^*}(1)=n|\varrho_2|+nm^2p\left(\frac{p-1}{2}\right).
		\end{equation*}
		Because $|\varrho_2|=|\eta_2|+m^2p$, we have $|e_{f_p^*}(0)-e_{f_p^*}(1)|= 0$, which implies that $G_1 [G_2]$ is a Legendre cordial graph modulo $p$.
	\end{proof}
	
	\begin{theorem}\label{cart}
		Let $G_1$ and $G_2$ be connected graphs of orders $mp$ and $n$, respectively, and suppose that $G_2$ has size $nk$. If
		\begin{equation*}
			|\varrho_1|=|\eta_1|+mk,
		\end{equation*}
		then the cartesian product graph $G_1\square G_2$ is a Legendre cordial graph modulo $p$.
	\end{theorem}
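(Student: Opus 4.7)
My plan is to mirror the strategy used in the preceding Theorems~\ref{join}--\ref{lexico}: construct an explicit bijective labeling of $V(G_1\square G_2)$ that reuses $g_1$ on each $G_1$-layer, and then evaluate $f_p^{*}$ separately on the two families of edges in $E(G_1\square G_2)$. Writing $V(G_1)=\{v_1,\ldots,v_{mp}\}$ and $V(G_2)=\{u_1,\ldots,u_n\}$, the labeling I propose is
\[
f((v_i,u_j)) = g_1(v_i) + mp(j-1), \qquad i=1,\ldots,mp,\ j=1,\ldots,n,
\]
which is a bijection onto $\{1,\ldots,nmp\}$ because the image of the $j$th layer is precisely $\{(j-1)mp+1,\ldots,jmp\}$.

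For edges of the form $(a,u_j)(b,u_j)$ with $ab\in E(G_1)$, the sum $f((a,u_j))+f((b,u_j))\equiv g_1(a)+g_1(b)\pmod p$, so the induced label is $0$ if $ab\in\eta_1$ and $1$ if $ab\in\varrho_1$, exactly as in the proof of Theorem~\ref{corona}. Over the $n$ copies of $G_1$, this contributes $n|\eta_1|$ to $e_{f_p^{*}}(0)$ and $n|\varrho_1|$ to $e_{f_p^{*}}(1)$.

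For edges $(v_i,a)(v_i,b)$ with $ab\in E(G_2)$, the divisibility of $mp(j-1)$ by $p$ gives $f((v_i,a))+f((v_i,b))\equiv 2g_1(v_i)\pmod p$. The key observation is that as $v_i$ runs over the block $D_t^1$, the residues $g_1(v_i)\pmod p$ cover $\{0,1,\ldots,p-1\}$ exactly once, and since $2$ is invertible modulo $p$, the residues $2g_1(v_i)\pmod p$ do likewise; across all $m$ blocks each residue is attained exactly $m$ times. By Theorem~\ref{thm2}, for each fixed edge $ab\in E(G_2)$ the $mp$ cartesian copies contribute $m + m\left(\tfrac{p-1}{2}\right)$ to $e_{f_p^{*}}(0)$ (the residue $0$ together with the $\tfrac{p-1}{2}$ nonresidues) and $m\left(\tfrac{p-1}{2}\right)$ to $e_{f_p^{*}}(1)$. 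Multiplying by $|E(G_2)|=nk$ and adding the $G_1$-contributions yields
\[
e_{f_p^{*}}(0) = n|\eta_1| + nmk + nmk\left(\tfrac{p-1}{2}\right), \qquad e_{f_p^{*}}(1) = n|\varrho_1| + nmk\left(\tfrac{p-1}{2}\right),
\]
so the hypothesis $|\varrho_1|=|\eta_1|+mk$ forces $e_{f_p^{*}}(0)=e_{f_p^{*}}(1)$.

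I expect the main obstacle to be identifying a labeling that simultaneously places $g_1$ literally inside every $G_1$-copy (so the $\varrho_1/\eta_1$ bookkeeping transfers verbatim to the $G_1$-edges of $G_1\square G_2$) and uses inter-layer shifts that are multiples of $p$ (so every $G_2$-edge sum collapses to $2g_1(v_i)\pmod p$ and spreads uniformly across the residues). Once this labeling is in hand, the rest is a routine $D_t^1$-partition count in the same spirit as Theorems~\ref{join}--\ref{lexico}.
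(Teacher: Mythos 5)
Your proposal is correct and follows essentially the same route as the paper: the identical labeling $f((a,u_j))=g_1(a)+mp(j-1)$, the same split of $E(G_1\square G_2)$ into $G_1$-layer edges (handled via $\varrho_1,\eta_1$) and $G_2$-fiber edges (whose sums collapse to $2g_1(a)\bmod p$ and sweep all residues over each block $D_t^1$), and the same final count. Your explicit remark that invertibility of $2$ modulo $p$ is what makes $2g_1(a)$ cover all residues is a small clarification the paper leaves implicit, but the argument is otherwise the same.
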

	
	\begin{proof}
		Let $V(G_2)=\{u_1,u_2,...,u_n\}$. Suppose that $f:V(G_1\square G_2)\to \{1,2,...,nmp\}$ is a bijective function defined by
		\begin{equation*}
			f((a,u_j))=g_1(a)+mp(j-1), \text{ for }j=1,2,...,n
		\end{equation*}
		where $a\in V(G_1)$. 
		
		For the edges of the form $(a,u_j)(b,u_j)$ where $ab\in E(G_1)$:
		\begin{equation*}
			f((a,u_j))+f((b,u_j))\equiv [g_1(a)+g_1(b)](\text{mod }p)
		\end{equation*}
		which leads to
		\begin{equation}\label{eq11}
			f_p^*((a,u_j)(b,u_j))=\begin{cases}
				0&\text{ if }ab\in \eta_1\\
				1&\text{ if }ab\in \varrho_1
			\end{cases}
		\end{equation}
		for $j=1,2,...,n$.
		
		For the remaining edges: 
		\begin{equation}\label{eq12}
			f((a,x))+f((a,y))\equiv 2g_1(a)(\text{mod }p)
		\end{equation}
		where $a\in V(G_1)$ and $xy\in E(G_2)$. 
		
		Now, for each $t=1,2,...,m-1$ and $xy\in E(G_2)$, let 
		\begin{equation*}
			\alpha_{t}^{xy}=\bigcup_{a\in D_t^1}\{\xi:f((a,x))+f((a,y))\equiv \xi(\text{mod }p),\text{ }0\leq \xi<p\}.
		\end{equation*}
		This expression simplifies to
		\begin{equation}\label{eq13}
			\alpha_{t}^{xy}=\{0\}\cup\{1,2,...,p-1\}
		\end{equation}
		as a consequence of (\ref{eq12}). 
		
		From (\ref{eq11}) and (\ref{eq13}), together with Theorem~\ref{thm2}, it follows that
		\begin{equation*}
			e_{f_p^*}(0)=n|\eta_1|+nmk\left(\frac{p-1}{2}\right)+nmk
		\end{equation*}
		and
		\begin{equation*}
			e_{f_p^*}(1)=n|\varrho_1|+nmk\left(\frac{p-1}{2}\right).
		\end{equation*}
		We observe that $|e_{f_p^*}(0)-e_{f_p^*}(1)|=0$ as a result of the equality $|\varrho_1|=|\eta_1|+mk$. Therefore, $G_1\square G_2$ is a Legendre cordial graph modulo $p$.
	\end{proof}
	
	\begin{theorem}\label{ten}
		Let $G_1$ and $G_2$ be connected graphs such that either $G_1$ or $G_2$ contains a cycle of odd order. Suppose that $G_1$ has order $np$. If
		\begin{equation*}
			|\varrho_1|=|\eta_1|,
		\end{equation*}
		then the tensor product graph $G_1\times G_2$ is a Legendre cordial graph modulo $p$.
	\end{theorem}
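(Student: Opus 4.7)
The plan is to follow the same template as Theorems~\ref{join}--\ref{cart}, shifting labels in multiples of $np$ so that every edge of the tensor product inherits its Legendre class from a corresponding edge of $G_1$. First, the hypothesis that $G_1$ or $G_2$ contains an odd cycle guarantees via Theorem~\ref{thm1} that $G_1\times G_2$ is connected, so Legendre cordiality is well-defined for it. Writing $V(G_2)=\{u_1,u_2,\ldots,u_s\}$ with $m=|E(G_2)|$, I would define the bijection $f:V(G_1\times G_2)\to\{1,2,\ldots,nps\}$ by
\begin{equation*}
f((a,u_j))=g_1(a)+np(j-1),
\end{equation*}
for $a\in V(G_1)$ and $j=1,2,\ldots,s$. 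Bijectivity is immediate because the values $g_1(a)$ range over $\{1,\ldots,np\}$ while the shifts $np(j-1)$ are distinct multiples of $np$.

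The second step is to compute the induced labels. Every edge of $G_1\times G_2$ has the form $(a,u_j)(b,u_k)$ with $ab\in E(G_1)$ and $u_ju_k\in E(G_2)$, and
\begin{equation*}
f((a,u_j))+f((b,u_k))=g_1(a)+g_1(b)+np(j+k-2)\equiv g_1(a)+g_1(b)\pmod{p}.
\end{equation*}
Hence $f_p^*$ assigns this edge the label $1$ when $ab\in\varrho_1$ and the label $0$ when $ab\in\eta_1$, independently of the particular edge $u_ju_k$ of $G_2$ used to form it.

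For the counting step, I would observe that for each pair consisting of $ab\in E(G_1)$ and $u_ju_k\in E(G_2)$, the tensor product yields exactly two distinct edges $(a,u_j)(b,u_k)$ and $(a,u_k)(b,u_j)$, both of which receive the induced label determined by the class of $ab$. Therefore $e_{f_p^*}(0)=2m|\eta_1|$ and $e_{f_p^*}(1)=2m|\varrho_1|$, and the hypothesis $|\varrho_1|=|\eta_1|$ immediately forces $|e_{f_p^*}(0)-e_{f_p^*}(1)|=0$. There is no genuine obstacle in the argument; the only subtlety is correctly accounting for the factor of $2$ arising from the symmetric pairing of tensor-product edges, after which all the remaining arithmetic reduces cleanly to the arithmetic already packaged in $\varrho_1$ and $\eta_1$.
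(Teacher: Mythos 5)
Your proposal is correct and follows essentially the same route as the paper's proof: shift the $G_1$-labels by multiples of $p$ across the copies indexed by $V(G_2)$, observe that every tensor-product edge inherits the Legendre class of its underlying $G_1$-edge, and count the factor of $2$ from the two edges $(a,u_j)(b,u_k)$ and $(a,u_k)(b,u_j)$ per pair of edges. (Your shift $np(j-1)$ is in fact the correct one for bijectivity; the paper writes $mp(j-1)$, which appears to be a typo but is harmless modulo $p$.)
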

	
	\begin{proof}
		Note that $G_1\times G_2$ is a connected graph according to Theorem~\ref{thm1}. Suppose that $V(G_2)=\{u_1,u_2,...,u_m\}$ and define the bijective function $f:V(G_1\times  G_2)\to \{1,2,...,nmp\}$ by 
		\begin{equation*}
			f((a,u_j))=g_1(a)+mp(j-1)
		\end{equation*}
		for $j=1,2,...,m$.
		
		Now, if $ab\in E(G_1)$ and $xy\in E(G_2)$, then $(a,x)(b,y),(b,x)(a,x)\in E(G_1\times G_2)$, and as a result 
		\begin{equation*}
			f((a,x))+f((b,y))\equiv f((b,x))+f((a,y))\equiv [g_1(a)+g_1(b)](\text{mod }p).
		\end{equation*}
		As a consequence,
		\begin{equation*}
			f_p^*((a,x)(b,y))=f_p^*((b,x)(a,y))=\begin{cases}
				0&\text{ if }ab \in\eta_1\\ 
				1&\text{ if }ab \in\varrho_1.
			\end{cases}
		\end{equation*}
		If $G_2$ has size $q$, then we obtain
		\begin{equation*}
			e_{f_p^*}(0)=2|\eta_1|q\text{ and }e_{f_p^*}(1)=2|\varrho_1|q.
		\end{equation*}
		The condition $|\varrho_1|=|\eta_1|$ leads to $|e_{f_p^*}(0)-e_{f_p^*}(1)|=0$, which shows that $G_1\times G_2$ is a Legendre cordial graph modulo $p$.
	\end{proof}
	
	\begin{theorem}\label{strong}
		Let $G_1$ and $G_2$ be connected graphs of orders $3p$ and $n$, respectively, and suppose that $G_1$ has size $n-1$. If 
		\begin{equation*}
			|\varrho_1|=|\eta_1|+1,
		\end{equation*}
		then the strong product graph $G_1\boxtimes G_2$ is a Legendre cordial graph modulo $p$.
	\end{theorem}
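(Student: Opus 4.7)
The plan is to follow the template of Theorems \ref{cart} and \ref{ten}. Let $V(G_2) = \{u_1, u_2, \ldots, u_n\}$ and define the bijection $f : V(G_1 \boxtimes G_2) \to \{1, 2, \ldots, 3pn\}$ by $f((a, u_j)) = g_1(a) + 3p(j-1)$. Then partition $E(G_1 \boxtimes G_2) = E(G_1 \square G_2) \cup E(G_1 \times G_2)$ into three natural classes: the $n$ horizontal copies of $G_1$, the $3p$ vertical copies of $G_2$, and the diagonal tensor edges.

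For the horizontal copies, edges of the form $(a, u_j)(b, u_j)$ with $ab \in E(G_1)$, the endpoint sum reduces to $g_1(a) + g_1(b) \pmod{p}$, so each of the $n$ copies contributes $|\varrho_1|$ label-$1$ and $|\eta_1|$ label-$0$ edges, for totals of $n|\varrho_1|$ and $n|\eta_1|$. For the diagonal tensor edges, each pair $(ab, u_j u_k)$ with $ab \in E(G_1)$ and $u_j u_k \in E(G_2)$ produces exactly two product edges $(a, u_j)(b, u_k)$ and $(a, u_k)(b, u_j)$, both of which inherit their label from the classification of $ab$, exactly as in Theorem \ref{ten}. Writing $q = |E(G_2)|$, this class contributes $2q|\varrho_1|$ label-$1$ and $2q|\eta_1|$ label-$0$ edges.

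The crucial third class is the vertical copies of $G_2$: for $(a, u_j)(a, u_k)$ with $u_j u_k \in E(G_2)$, the endpoint sum reduces to $2g_1(a) \pmod{p}$, depending only on $a$. Here I invoke the partition $D_0^1, D_1^1, D_2^1$ of $V(G_1)$ (meaningful because $|V(G_1)| = 3p$) together with the fact that multiplication by $2$ is a bijection on $\mathbb{Z}/p\mathbb{Z}$: as $a$ ranges over a fixed block $D_t^1$, the residues $2g_1(a) \pmod{p}$ run through $\{0, 1, \ldots, p-1\}$ exactly once. Applying Theorem \ref{thm2} to the $p-1$ nonzero residues, each block yields $(p-1)/2$ label-$1$ and $(p+1)/2$ label-$0$ edges per edge of $G_2$. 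Summed over the three blocks and the $q$ edges of $G_2$, this class contributes $3q(p-1)/2$ label-$1$ and $3q(p+1)/2$ label-$0$ edges.

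Assembling the three classes produces $e_{f_p^*}(0) = (n + 2q)|\eta_1| + 3q(p+1)/2$ and $e_{f_p^*}(1) = (n + 2q)|\varrho_1| + 3q(p-1)/2$; substituting the hypothesis $|\varrho_1| = |\eta_1| + 1$ should collapse the difference to a simple expression that falls within $\pm 1$ under the stated size condition on $G_1$. The main obstacle is precisely this endgame bookkeeping: the vertical class overproduces label-$0$ edges by $3q$, while the horizontal and tensor classes jointly overproduce label-$1$ edges by $n + 2q$, and one must verify that these two excesses cancel to within one, using the size condition $|E(G_1)| = n - 1$ together with connectedness of $G_2$. Once this cancellation is checked, $|e_{f_p^*}(0) - e_{f_p^*}(1)| \leq 1$, and the Legendre cordial property follows.
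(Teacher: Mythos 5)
Your labeling and three-way edge decomposition are exactly those of the paper, which merely sketches this proof by citing Theorems~\ref{cart} and~\ref{ten}, and your counts agree with the paper's: writing $q=|E(G_2)|$, you get $e_{f_p^*}(1)=(n+2q)|\varrho_1|+3q\left(\frac{p-1}{2}\right)$ and $e_{f_p^*}(0)=(n+2q)|\eta_1|+3q\left(\frac{p+1}{2}\right)$, so that under $|\varrho_1|=|\eta_1|+1$ the difference collapses to $e_{f_p^*}(0)-e_{f_p^*}(1)=q-n$. Your treatment of the vertical class (the bijection $a\mapsto 2g_1(a)\bmod p$ on each block $D_t^1$, plus the extra label-$0$ edge coming from the residue $0$) is in fact spelled out more carefully than anything in the paper's own argument.

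The gap is in the endgame you explicitly leave unchecked. You need $|q-n|\leq 1$, i.e.\ $|E(G_2)|\in\{n-1,n,n+1\}$, and the facts you propose to use cannot deliver this: $|E(G_1)|=n-1$ says nothing about $G_2$, and connectedness of $G_2$ gives only the lower bound $|E(G_2)|\geq n-1$ (take $G_2=K_n$ and the difference $q-n$ is of order $n^2$). What actually closes the argument --- and what the paper's proof silently substitutes throughout, e.g.\ in the terms $3\left(\frac{p-1}{2}\right)(n-1)$ and $2|\eta_1|(n-1)$ --- is the hypothesis that $G_2$ has size $n-1$, i.e.\ that $G_2$ is a tree, which gives $q-n=-1$ exactly. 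The printed hypothesis ``$G_1$ has size $n-1$'' appears to be a typo for ``$G_2$ has size $n-1$'' (the size of $G_1$ is never used, and a connected $G_1$ of order $3p$ would in any case force $n\geq 3p$); as literally stated, the hypotheses do not suffice for your computation or the paper's, so you should either adopt the corrected hypothesis or flag the discrepancy rather than assert that the cancellation ``should'' work out.
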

	
	\begin{proof}
		The proof of this theorem follows by combining the arguments used in the proofs of Theorems~\ref{cart} and~\ref{ten}, noting that $E(G_1\boxtimes G_2)=E(G_1\square G_2)\cup E(G_1\times G_2)$. Specifically, by applying the method from the proof of Theorem~\ref{cart}, we find that the number of edges in $G_1\square G_2$ labeled $0$ and $1$ are 
		\begin{equation*}
			n|\eta_1|+3\left(\frac{p-1}{2}\right)(n-1)+3(n-1)\text{ and }n|\varrho_1|+3\left(\frac{p-1}{2}\right)(n-1),
		\end{equation*}
		respectively. Similarly, using the approach from the proof of Theorem~\ref{ten}, the number of edges in $G_1\times G_2$ labeled $0$ and $1$ are 
		\begin{equation*}
			2|\eta_1|(n-1)\text{ and }2|\varrho_1|(n-1),
		\end{equation*}
		respectively. Thus,
		\begin{equation*}
			e_{f_p^*}(0)=n|\eta_1|+3\left(\frac{p-1}{2}\right)(n-1)+3(n-1)+2|\eta_1|(n-1)
		\end{equation*}
		and
		\begin{equation*}
			e_{f_p^*}(1)=n|\varrho_1|+3\left(\frac{p-1}{2}\right)(n-1)+2|\varrho_1|(n-1).
		\end{equation*}
		Since $|\varrho_1|=|\eta_1|+1$, we have $|e_{f_p^*}(0)-e_{f_p^*}(1)|=1$. Thus, $G_1\boxtimes G_2$ is a Legendre cordial graph modulo $p$.
	\end{proof}

\end{document}